\newtheorem{theorem}{Theorem}
\newtheorem{prop}{Proposition}
\newtheorem{lemma}{Lemma}
\newtheorem{cor}{Corollary}
\newtheorem{definition}{Definition}
\newcommand{\RR}{\mathbb{R}}
\newcommand{\NN}{\mathbb{N}}
\newcommand{\ZZ}{\mathbb{Z}}
\newcommand{\CC}{\mathbb{C}}
\newsavebox\foobox
\newlength{\foodim}
\newcommand{\slantbox}[2][0]{\mbox{%
        \sbox{\foobox}{#2}%
        \foodim=#1\wd\foobox
        \hskip \wd\foobox
        \hskip -0.5\foodim
        \pdfsave
        \pdfsetmatrix{1 0 #1 1}%
        \llap{\usebox{\foobox}}%
        \pdfrestore
        \hskip 0.5\foodim
}}
\newcommand{\laplace}{\slantbox[-.45]{$\mathscr{L}$}}
\newcommand{\invlaplace}{\laplace^{-1}}
\newcommand{\fourier}{\mathcal{F}}
\newcommand{\invfourier}{\mathcal{F}^{-1}}
\newcommand{\defeq}{\vcentcolon=}
\newcommand{\eqdef}{=\vcentcolon}
\begin{document}

\title[Convergence of Newton series and asymptotics of finite differences]{On the convergence of Newton series and the asymptotics of finite differences}

\begin{abstract}
     Suppose a complex function $f$ has a Lebesgue measurable inverse Laplace transform. We show that the $n$th order forward and backward differences of $f$ at $z_0\in\CC$ tend to zero as $n\to\infty$ whenever $z_0$ lies in the region of absolute convergence of $f$. Under the same hypothesis, we show that the Newton series of $f$ centered at $z_0$ exists and converges in the half-plane $\Re(z)>\Re(z_0)$. Assuming instead that $f$ has a Lebesgue measurable inverse Fourier transform, we show that the $n$th order forward, backward, and central differences of $f$ at any $y\in\RR$ are $o(2^n)$. Consequently, we show that the binomial sum $\sum_{k\geq0}{n\choose k}f(k)$ is $o(2^n)$.
\end{abstract}

\author{Glenn Bruda}
\thanks{\textit{Email:} \texttt{glenn.bruda@ufl.edu}}
\date{\today}
\maketitle

\section{Introduction}

For a complex function $f$, we define the forward, backward, and central differences of $f$ at $z\in\CC$, denoted $\Delta_{h}^{n}[f](z)$, $\nabla_{h}^{n}[f](z)$, $ \delta_{h}^{n}[f](z)$ respectively, as
\begin{align*}
    \Delta_{h}^{n}[f](z)&\defeq\sum_{k=0}^{n}{n\choose k}f(z+kh)(-1)^{n-k},\\
    \nabla_{h}^{n}[f](z)&\defeq\sum_{k=0}^{n}{n\choose k}f(z-kh)(-1)^{k},\\
    \delta_{h}^{n}[f](z)&\defeq\sum_{k=0}^{n}{n\choose k}f\left(z+\left(\frac{n}{2}-k\right)h\right)(-1)^{k},
\end{align*}
where $h\in\RR^+$ and $n\in\mathbb{N}\defeq\{0,1,2,\cdots\}$. By convention, if the subscript $h$ of a finite difference is omitted, $h$ is assumed to be $1$.

In general, the limiting behavior of finite differences can be difficult to determine. In \cite{flajoletsedgewick}, Flajolet and Sedgewick used the N\o rlund-Rice integral\footnote{See \cite[Lemma 1]{flajoletsedgewick}.} to determine the asymptotics of many finite differences relevant to combinatorics and computer science. Being of interest to such fields, most of these differences do not converge to zero. For some of these differences, this can be surprising, especially when the function differenced vanishes at infinity. Furthermore, the finite differences of many functions, even those that are unbounded such as non-constant polynomials, do converge to zero. In this article, we clarify the seemingly unpredictable limiting behavior of forward and backward differences by analyzing a Laplace transform integral formula. In particular, we prove the following result:
\begin{prop}
    Assuming it exists, suppose the inverse Laplace transform of $f$ is a linear combination of Lebesgue measurable functions and elements of the set $$\left\{\delta^{(m)}(t-a):m\in\NN,a\in\RR^{\geq0}\right\},$$ where $\delta^{(m)}$ is the $m$th distributional derivative of the Dirac delta function. Let $z\in\CC$ lie in the region of absolute convergence of $f$ and let $h\in\RR^+$. Then
    \begin{align*}
        \lim_{n\to\infty}\Delta_{h}^{n}[f](z)=0.
    \end{align*}
\end{prop}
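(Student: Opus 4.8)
The plan is to represent $f$ through its inverse Laplace transform and collapse the difference operator into a single integral carrying a geometrically small factor. Write $f(z)=\int_0^\infty e^{-zt}g(t)\,dt$, where by hypothesis $g=g_0+\sum_{j}c_j\,\delta^{(m_j)}(\,\cdot\,-a_j)$ is a finite sum with $g_0$ Lebesgue measurable and each $a_j\in\RR^{\geq0}$. Since the region of absolute convergence of a Laplace transform is a right half-plane, $\Re(z+kh)\geq\Re(z)$ also lies in it for every $k\geq0$, so each $f(z+kh)$ is given by an absolutely convergent integral. By linearity of $\Delta_h^n$ and of the integral, it suffices to treat $f_0\defeq\laplace^{-1}[\,\cdot\,]$ of $g_0$ and each term $f_j(z)=z^{m_j}e^{-a_jz}$ separately.

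For the measurable part, interchange the finite sum with the integral and use $\sum_{k=0}^{n}{n\choose k}(-1)^{n-k}x^k=(x-1)^n$ with $x=e^{-ht}$:
\[
    \Delta_h^n[f_0](z)=\int_0^\infty e^{-zt}g_0(t)\,(e^{-ht}-1)^n\,dt.
\]
For $t>0$ and $h>0$ one has $0<e^{-ht}<1$, hence $|e^{-ht}-1|<1$ and $(e^{-ht}-1)^n\to0$ pointwise for a.e.\ $t$. Since $\bigl|e^{-zt}g_0(t)(e^{-ht}-1)^n\bigr|\leq e^{-\Re(z)t}|g_0(t)|$, which is integrable precisely because $z$ lies in the region of absolute convergence, dominated convergence gives $\Delta_h^n[f_0](z)\to0$.

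For a delta-derivative summand $g(t)=\delta^{(m)}(t-a)$, pairing the distribution with the test function $e^{-zt}(e^{-ht}-1)^n$ (smooth in $t$) yields
\[
    \Delta_h^n[f_j](z)=(-1)^m\left.\frac{d^m}{dt^m}\Bigl[e^{-zt}(e^{-ht}-1)^n\Bigr]\right|_{t=a}.
\]
If $a=0$, then $e^{-ht}-1$ vanishes to first order at $t=0$, so $(e^{-ht}-1)^n$ vanishes there to order $n$ and its $m$th derivative at $0$ is $0$ as soon as $n>m$; the limit is $0$. If $a>0$, expand the $m$th derivative by the Leibniz rule: each of the at most $m+1$ resulting terms carries a factor $(e^{-ha}-1)^{n-i}$ for some $0\leq i\leq m$, times a coefficient polynomial in $n$ of degree at most $m$, times a bounded factor; since $|e^{-ha}-1|<1$, each term tends to $0$, so the limit is again $0$.

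The genuinely essential point is that the dominating function $e^{-\Re(z)t}|g_0(t)|$ is integrable, which is exactly what "$z$ lies in the region of absolute convergence" means, together with the half-plane structure of that region so that all shifted points $z+kh$ remain inside it; the interchange of the finite sum with the integral and the Leibniz bookkeeping are routine. The only place demanding a moment's care is the delta-derivative case at $a>0$, where one must confirm that the polynomial-in-$n$ Leibniz coefficients cannot overcome the geometric decay $|e^{-ha}-1|^{\,n}$ — which they cannot, as $|e^{-ha}-1|<1$ strictly.
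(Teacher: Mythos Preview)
Your proof is correct and takes essentially the same approach as the paper: split $\invlaplace f$ into its measurable part and its delta-derivative part, use the integral representation $\Delta_h^n[f_0](z)=\int_0^\infty e^{-zt}(e^{-ht}-1)^n g_0(t)\,dt$ together with dominated convergence for the former, and a polynomial-in-$n$ times $|e^{-ha}-1|^n$ bound for the latter. The only cosmetic difference is that the paper first passes to $f_j(z)=z^{m_j}e^{-a_jz}$ and expands $\Delta_h^n[f_j]$ as a binomial sum, whereas you evaluate the distributional pairing $\langle\delta^{(m)}(\cdot-a),\,e^{-z\cdot}(e^{-h\cdot}-1)^n\rangle$ directly via Leibniz---both routes yield the same estimate.
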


Similar analysis allows us to determine the existence and convergence of certain Newton series\footnote{Also known as \emph{Newton's forward difference formula}.}:
\begin{definition}[Newton series]
    For a suitable function $f$, the Newton series of $f$ centered at $z_0\in\CC$ is given by
    \begin{align*}
        f(z)=\sum_{k\geq0}{z-z_0\choose k}\Delta^{k}[f](z_0).
    \end{align*}
\end{definition}

Newton series, often described as a ``finite analog of a Taylor series'' \cite{wolfram}, can be tricky to prove the convergence of. The $n$th partial sum of a Newton series of a well-defined function $f$ centered at $z_0$ agrees with $f$ for $z\in\{z_0,z_0+1,\cdots,z_0+n\}$; as such, these partial sums are sometimes called \emph{Newton interpolating polynomials}, as seen in \cite{entireandexptype}. We are interested in where a Newton series agrees with $f$ within the context of the rest of the complex plane.

Though Newton series exist for a variety of analytic functions, there are curious exceptions (e.g., the canonical $\sin\pi z$). Using the aforementioned Laplace transform integral formula, we guarantee the existence and convergence of Newton series for certain functions:
\begin{prop}
    Assuming it exists, suppose the inverse Laplace transform of $f$ is a linear combination of Lebesgue measurable functions and elements of the set $$\left\{\delta^{(m)}(t-a):m\in\NN,a\in[0,\log2)\right\}.$$ Let $z_0\in\CC$ lie in the region of absolute convergence of $f$. Then the Newton series of $f$ centered at $z_0$ exists and converges for all $z\in\CC:\Re(z)>\Re(z_0)$.    
\end{prop}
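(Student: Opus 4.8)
The plan is to exploit, as in the proof of Proposition 1, the integral representation
\[
\Delta^{k}[f](z_0)=\int_{0}^{\infty}e^{-z_0 t}\bigl(e^{-t}-1\bigr)^{k}g(t)\,dt ,
\]
where $g$ denotes the inverse Laplace transform of $f$; this comes from expanding $\Delta^k$ by the binomial theorem and interchanging a finite sum with the integral, and it holds for the Dirac terms in the distributional sense $\langle\delta^{(m)}(\cdot-a),\varphi\rangle=(-1)^m\varphi^{(m)}(a)$, which there reads $\Delta^{k}[f](z_0)=(-1)^m\tfrac{d^m}{dt^m}\bigl[e^{-z_0 t}(e^{-t}-1)^k\bigr]_{t=a}$ for $f(z)=\laplace\bigl[\delta^{(m)}(t-a)\bigr](z)=z^m e^{-za}$. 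Since the region of absolute convergence is a right half-plane (or all of $\CC$) containing $z_0$, it contains every $z_0+k$ as well, so each coefficient $\Delta^k[f](z_0)$ is defined and the Newton series of $f$ exists. By linearity of $\Delta^k$ it suffices to handle separately the case $f=\laplace[g]$ with $g$ measurable and the case $f(z)=z^m e^{-za}$ with $a\in[0,\log 2)$.

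Writing $w\defeq z-z_0$, I would next record the asymptotics $\bigl|\binom{w}{k}\bigr|\sim|c_w|\,k^{-\Re(w)-1}$ as $k\to\infty$ (from $\binom{w}{k}=c_w(-1)^k\Gamma(k-w)/\Gamma(k+1)$ and $\Gamma(k-w)/\Gamma(k+1)\sim k^{-w-1}$, with $\binom wk=0$ eventually when $w\in\NN$): thus $\sum_{k\geq0}\bigl|\binom{w}{k}\bigr|<\infty$ exactly when $\Re(w)>0$, while $\bigl|\binom{w}{k}\bigr|$ is always dominated by a fixed polynomial in $k$. For the measurable case, $|e^{-t}-1|\leq1$ on $[0,\infty)$ and the hypothesis on $z_0$ give $|\Delta^k[f](z_0)|\leq\int_0^\infty e^{-\Re(z_0)t}|g(t)|\,dt\eqdef M<\infty$ for all $k$, so the Newton series converges absolutely whenever $\Re(z)>\Re(z_0)$. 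To identify its sum with $f$ there, substitute $u=1-e^{-t}$: this gives $f(z)=\int_0^1(1-u)^{z-1}G(u)\,du$ and $\Delta^k[f](z_0)=(-1)^k\int_0^1(1-u)^{z_0-1}u^kG(u)\,du$ with $G(u)\defeq g(-\log(1-u))$, after which Fubini (legitimate since $\int_0^1(1-u)^{\Re(z_0)-1}|G(u)|\,du=M$ and $\sum_k|\binom{w}{k}|<\infty$) together with $\sum_{k\geq0}\binom{w}{k}(-u)^k=(1-u)^{w}$ collapses the double sum to $f(z)$.

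For $f(z)=z^m e^{-za}$, applying Leibniz's rule to $\tfrac{d^m}{dt^m}[e^{-z_0t}(e^{-t}-1)^k]$, and noting that each of the $m$ differentiations lowers the exponent of $(e^{-t}-1)$ by at most one while bringing down a factor at most linear in $k$, yields $|\Delta^k[f](z_0)|\leq C\,p(k)\,(1-e^{-a})^{k-m}$ for $k\geq m$, with $\deg p\leq m$ and $1-e^{-a}<\tfrac12$ because $a<\log 2$; hence these coefficients decay geometrically, and against the polynomial growth of $\bigl|\binom{w}{k}\bigr|$ the Newton series converges absolutely for every $z$. Its sum equals $z^m e^{-za}$ by differentiating $m$ times in $a$ the identity $e^{-za}=e^{-z_0 a}\sum_{k\geq0}\binom{z-z_0}{k}(e^{-a}-1)^k$ (valid for all $z$ since $|e^{-a}-1|<1$), term-by-term differentiation being justified by uniform convergence of the series and of its $a$-derivatives on compact subsets of $[0,\log 2)$, where $|e^{-a}-1|\leq\tfrac12$. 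Combining the two cases by linearity finishes the proof.

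The main obstacle I anticipate is the measurable case: there the Newton coefficients are only bounded, not decaying, so the half-plane $\Re(z)>\Re(z_0)$ is genuinely the best possible and the argument has no room to spare — one must use the sharp $k^{-\Re(z-z_0)-1}$ decay of the binomial coefficients rather than any cruder estimate, and one must justify the interchange of summation and integration carefully, which works because the substitution $u=1-e^{-t}$ converts the region-of-absolute-convergence hypothesis into exactly the weighted integrability $\int_0^1(1-u)^{\Re(z_0)-1}|G(u)|\,du<\infty$. The distributional bookkeeping in the other case is by comparison routine, the one point needing attention being that $a<\log 2$ is precisely what makes $|e^{-a}-1|<\tfrac12$.
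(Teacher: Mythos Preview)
Your proof is correct, and for the measurable piece it is essentially the paper's argument: both use the integral formula $\Delta^{k}[\laplace m](z_0)=\int_0^\infty e^{-z_0 t}(e^{-t}-1)^{k}m(t)\,dt$, the absolute summability $\sum_{k\geq 0}\bigl|\binom{z-z_0}{k}\bigr|<\infty$ for $\Re(z)>\Re(z_0)$, Fubini, and the binomial identity $\sum_{k\geq 0}\binom{w}{k}(e^{-t}-1)^{k}=e^{-wt}$; your substitution $u=1-e^{-t}$ is a cosmetic repackaging of exactly that computation. The genuine difference is in how the Dirac part $p(z)=\sum_k a_k z^{k}e^{-b_k z}$ is handled. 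The paper simply observes that $p(z+z_0)$ is entire of exponential type $\max_k b_k<\log 2$ and invokes a classical theorem (its reference on entire functions of exponential type) guaranteeing that any such function equals its Newton series on all of $\CC$. You instead estimate $\Delta^{k}[z^{m}e^{-az}](z_0)$ directly via Leibniz and identify the sum by differentiating the identity $e^{-za}=e^{-z_0 a}\sum_{k\geq 0}\binom{z-z_0}{k}(e^{-a}-1)^{k}$ term by term in $a$. Your treatment is fully self-contained and in fact goes through for every $a\geq 0$ (only $|e^{-a}-1|<1$ is needed), so the restriction $a<\log 2$ is an artifact of the paper's appeal to the exponential-type theorem rather than an intrinsic constraint on the individual monomials $z^{m}e^{-az}$; the paper's route, on the other hand, is shorter and places the result within the classical theory of Newton series for entire functions.
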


This result induces the corollary that, for an appropriately chosen $z_0\in\CC$, any rational function $R$ with real coefficients admits a Newton series centered at $z_0$ which is convergent in the half-plane $\Re(z)>\Re(z_0)$. In particular, we can take $z_0$ to be any complex number such that its real part is strictly larger than the real parts of every pole of $R$ in the right half-plane. If $R$ has no poles in the right half-plane, then we must take $\Re(z_0)>0$.

By making the fairly strong assumption that $f$ has an inverse Laplace transform (alongside other conditions), we are able to show that the forward and backward differences of $f$ converge to zero. If we instead assume that $f$ has a Lebesgue measurable inverse Fourier transform, we find the $n$th order forward, backward, and central differences of $f$ at any $y\in\RR$ to be $o(2^n)$. A corollary to this gives a similar result for general $n$-term binomial sums:
\begin{prop}
     Assuming it exists, suppose the inverse Fourier transform of $f$ is a linear combination of Lebesgue measurable functions and elements of the set $$\{\delta^{(m)}(x-a):m\in\NN,a\in\RR\backslash\ZZ\}.$$ Then
    \begin{align*}
        \sum_{k=0}^{n}{n\choose k}f(k)=o(2^n).
    \end{align*}    
\end{prop}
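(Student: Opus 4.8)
The plan is to connect the binomial sum $\sum_{k=0}^n \binom{n}{k}f(k)$ to a forward difference so that the anticipated $o(2^n)$ bound on finite differences (the Fourier-transform analogue of Proposition 1, which must be established just before this corollary) can be invoked directly. Observe that $\sum_{k=0}^n \binom{n}{k}f(k) = (-1)^n \sum_{k=0}^n \binom{n}{k}(-1)^{n-k}f(k)\cdot(-1)^{2k}$; more usefully, if we set $g(z) \defeq f(z)$ we have $\Delta^n[f](0) = \sum_{k=0}^n \binom{n}{k}(-1)^{n-k}f(k)$, which differs from the target sum only in the alternating sign. So the first step is to remove that sign: writing $\tilde f(z) \defeq e^{i\pi z} f(z) = (-1)^z f(z)$ (interpreting $(-1)^z = e^{i\pi z}$), we get $\Delta^n[\tilde f](0) = \sum_{k=0}^n \binom{n}{k}(-1)^{n-k}e^{i\pi k}f(k) = (-1)^n\sum_{k=0}^n \binom{n}{k}f(k)$, since $(-1)^{n-k}(-1)^k = (-1)^n$. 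Hence $\left|\sum_{k=0}^n \binom{n}{k}f(k)\right| = \left|\Delta^n[\tilde f](0)\right|$, and it suffices to show $\Delta^n[\tilde f](0) = o(2^n)$.

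The second step is to verify that $\tilde f$ satisfies the hypotheses of the forward-difference-is-$o(2^n)$ proposition, i.e. that multiplication by $e^{i\pi z}$ interacts well with the inverse Fourier transform. By the modulation (frequency-shift) property of the Fourier transform, $\invfourier[\tilde f](x) = \invfourier[f](x - c)$ for the appropriate constant $c$ (with normalization fixing $c = 1/2$ or $\pm 1/2$ up to the paper's convention), so $\invfourier[\tilde f]$ is just a translate of $\invfourier[f]$. Translation preserves Lebesgue measurability, and it sends $\delta^{(m)}(x-a)$ to $\delta^{(m)}(x - (a+c))$; the key point is that since $c = \pm 1/2 \notin \ZZ$, if $a \in \RR\setminus\ZZ$ then... wait — this is exactly where care is needed, because $a + 1/2$ need not lie in $\RR\setminus\ZZ$ if $a \in \frac12 + \ZZ$. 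Actually the cleaner route is to go the other way: the hypothesis on $f$ is stated with $a \in \RR\setminus\ZZ$, and we want to land in the hypothesis class of the finite-difference proposition, which (one expects) allows $a$ in a shifted set; so I would either state the finite-difference proposition with the half-integer-shifted Dirac locations built in, or absorb the shift by noting $\invfourier[f]$ supported off $\ZZ$ becomes, after the half-shift, supported off $\frac12 + \ZZ$, and check that the finite-difference estimate only needs the Dirac masses to avoid $\ZZ$ — which holds since $\frac12 + \ZZ$ is disjoint from $\ZZ$. So the real content is bookkeeping the shift so that the translated distribution still avoids the integers where the $2^n$ growth would actually be achieved (the function $(-1)^z$ itself, with inverse Fourier transform $\delta(x - \tfrac12)$ up to normalization, is the extremal case and explains why $\ZZ$ is excluded).

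The third step is then purely an application: invoke the (to-be-proved) statement that the $n$th forward difference of a function with measurable-plus-off-integer-Dirac inverse Fourier transform is $o(2^n)$, applied to $\tilde f$ at the point $0 \in \RR$, yielding $\Delta^n[\tilde f](0) = o(2^n)$, and combine with Step 1 to conclude $\sum_{k=0}^n \binom{n}{k}f(k) = o(2^n)$.

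The main obstacle I anticipate is not in this corollary's proof but in making sure the exclusion sets line up: one must check that multiplying by $e^{i\pi z}$ does not move any Dirac mass onto an integer (it cannot, since it shifts by a half-integer and the original masses avoid $\ZZ$, hence land in $\frac12 + \ZZ$), and conversely that the finite-difference proposition being cited is genuinely stated for inverse Fourier transforms whose singular support avoids $\ZZ$ — so that $\frac12 + \ZZ$, being disjoint from $\ZZ$, is admissible. If instead the finite-difference proposition is stated only for $a \in \RR\setminus\ZZ$ verbatim, I would first record the trivial observation that its proof in fact tolerates any $a \notin \ZZ$ and that $\frac12+\ZZ \cap \ZZ = \varnothing$, so no loss occurs. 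Everything else — the sign manipulation and the modulation identity — is routine.
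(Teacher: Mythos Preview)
Your approach is essentially identical to the paper's: it defines $\alpha(y)=e^{i\pi y/h}f(y)$, records $\Delta^{n}_{h}[\alpha](y)=(-1)^n e^{i\pi y/h}\sum_{k}{n\choose k}f(y+kh)$, uses the modulation identity $\invfourier\alpha(x)=\invfourier f(x+h^{-1}/2)$, and then applies its Theorem~3 (the finite-difference-is-$o(2^n)$ result). Your worry about the exclusion sets dissolves because Theorem~3 is stated with Diracs excluded from $h^{-1}/2+h^{-1}\ZZ$ (i.e., $\tfrac12+\ZZ$ when $h=1$), so the half-shift carries $\RR\setminus\ZZ$ precisely onto the admissible set $\RR\setminus(\tfrac12+\ZZ)$ and no further argument is needed.
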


The results of this article are split into two parts: an analysis of finite differences and Newton series using a Laplace transform integral formula and an analysis of finite differences using Fourier transform integral formulas. Following this portion, there is a short discussion on the applications and relevance of these results.

\section{Analysis of a Laplace transform integral formula}

\begin{definition}[Laplace regions of conditional and absolute convergence]
    Suppose $f$ has an inverse Laplace transform. Then the Laplace region of {conditional} convergence of $f$, denoted $\mathcal{C}_{L}(f)$, is the set
    \begin{align*}
        \mathcal{C}_{L}(f)\defeq\left\{z\in\CC:\int_{0}^{\infty}e^{-zt}\invlaplace f(t)dt<\infty\right\}.
    \end{align*}
    The Laplace region of {absolute} convergence of $f$, denoted $\mathcal{A}_{L}(f)$, is the set
    \begin{align*}
        \mathcal{A}_{L}(f)\defeq\left\{z\in\CC:\int_{0}^{\infty}\left|e^{-zt}\invlaplace f(t)\right|dt<\infty\right\}.
    \end{align*}
\end{definition}

\begin{lemma}
    Suppose $f$ has an inverse Laplace transform. Let $n\in\NN$, $h\in\RR^+$, and $z\in\mathcal{C}_{L}(f)$. Then
    \begin{align*}
        \Delta_{h}^{n}[f](z)=\int_{0}^{\infty}e^{-zt}(e^{-ht}-1)^n\invlaplace f(t)dt.
    \end{align*}
\end{lemma}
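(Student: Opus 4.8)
The plan is to start from the definition
\[
\Delta_{h}^{n}[f](z)=\sum_{k=0}^{n}\binom{n}{k}(-1)^{n-k}f(z+kh)
\]
and substitute the defining integral representation of $f$ (the one attached to its inverse Laplace transform) into each term. The point that makes this legitimate is that the set on which a Laplace integral converges is a right half-plane: if $\int_{0}^{\infty}e^{-z_{0}t}\invlaplace f(t)\,dt$ converges, then $\int_{0}^{\infty}e^{-wt}\invlaplace f(t)\,dt$ converges for every $w$ with $\Re(w)>\Re(z_{0})$, which is the classical abscissa-of-convergence fact proved by integrating by parts against the bounded partial-integral function $t\mapsto\int_{0}^{t}e^{-z_{0}u}\invlaplace f(u)\,du$. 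Since $h\in\RR^{+}$, we have $\Re(z+kh)=\Re(z)+kh>\Re(z)$ for $1\le k\le n$, while $z+0\cdot h=z\in\mathcal{C}_{L}(f)$ by hypothesis; hence every node $z+kh$ with $0\le k\le n$ lies in $\mathcal{C}_{L}(f)$, so $f(z+kh)=\int_{0}^{\infty}e^{-(z+kh)t}\invlaplace f(t)\,dt$ for each such $k$.

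Next I would interchange the finite sum with the improper integral. Because there are only $n+1$ terms and each integral $\int_{0}^{\infty}e^{-(z+kh)t}\invlaplace f(t)\,dt$ converges, this needs nothing beyond finite additivity: for every finite $R$,
\begin{align*}
\sum_{k=0}^{n}\binom{n}{k}(-1)^{n-k}\int_{0}^{R}e^{-(z+kh)t}\invlaplace f(t)\,dt&=\int_{0}^{R}e^{-zt}\invlaplace f(t)\sum_{k=0}^{n}\binom{n}{k}(-1)^{n-k}e^{-kht}\,dt,
\end{align*}
and letting $R\to\infty$ both sides converge to the respective improper integrals. Finally the binomial theorem collapses the inner sum, $\sum_{k=0}^{n}\binom{n}{k}(e^{-ht})^{k}(-1)^{n-k}=(e^{-ht}-1)^{n}$, giving
\[
\Delta_{h}^{n}[f](z)=\int_{0}^{\infty}e^{-zt}(e^{-ht}-1)^{n}\invlaplace f(t)\,dt,
\]
and in particular the right-hand integral converges as an improper integral. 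The case $n=0$ is the trivial identity $f(z)=\int_{0}^{\infty}e^{-zt}\invlaplace f(t)\,dt$.

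The only nontrivial point, and hence the main obstacle, is the half-plane claim when $\invlaplace f$ is distribution-valued rather than an honest locally integrable function, as is permitted by the later propositions. I would dispose of this by splitting $\invlaplace f$ into its Lebesgue measurable part, to which the argument above applies verbatim, and a finite linear combination of terms $\delta^{(m)}(t-a)$; for each such term the associated ``integral'' is the entire function $z\mapsto(-1)^{m}\frac{d^{m}}{dt^{m}}e^{-zt}\big|_{t=a}=z^{m}e^{-za}$, which is defined on all of $\CC$ and therefore imposes no restriction whatsoever. Thus $\mathcal{C}_{L}(f)$ is, in every case, stable under moving strictly to the right, which is all that was used; the remainder of the argument is the bookkeeping above and requires no dominated-convergence or uniform-convergence machinery.
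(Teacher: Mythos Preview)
Your argument is correct and follows essentially the same route as the paper's proof: substitute the Laplace integral for each $f(z+kh)$, swap the finite sum with the integral, and collapse the inner sum via the binomial theorem. You are in fact more careful than the paper, which tacitly uses that $z+kh\in\mathcal{C}_{L}(f)$ and that a finite sum commutes with an improper integral, whereas you spell out the abscissa-of-convergence half-plane fact and the $R\to\infty$ limit; your final paragraph on distributional $\invlaplace f$ is extra but harmless.
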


\begin{proof}    
Let $n\in\NN$ and $h\in\RR^+$. Let $f\in\invlaplace$. Let $z\in\mathcal{C}_{L}(f)$. Denoting $g=\invlaplace f$ for brevity, note $\laplace g=f$. So we have that
\begin{align*}
    \Delta_{h}^{n}[\laplace g](z)&=\sum_{k=0}^{n}{n\choose k}(-1)^{n-k}\laplace g(z+kh)\\
    &=\sum_{k=0}^{n}{n\choose k}(-1)^{n-k}\int_{0}^{\infty}e^{-(z+kh)t}g(t)dt\\
    &=\int_{0}^{\infty}e^{-zt}g(t)\sum_{k=0}^{n}{n\choose k}(-1)^{n-k}e^{-htk}dt\\
    &=\int_{0}^{\infty}e^{-zt}(e^{-ht}-1)^ng(t)dt.
\end{align*}
Thus,
\begin{align*}
    \Delta_{h}^{n}[f](z)=\int_{0}^{\infty}e^{-zt}(e^{-ht}-1)^n\invlaplace f(t)dt,
\end{align*}
as desired.
\end{proof}

One of the two applications of Lemma 1 arises in proving the following theorem, wherein we show when $\lim_{n\to\infty}\Delta^{n}_{h}[f](z)=0$. To do this, we assume that $z\in\mathcal{A}_{L}(f)$; by \cite[Subsection 2.7.3]{analyticinROAC}, this implies that $f$ is analytic at $z$. Since finite differences are a way of approximating derivatives, it is not surprising that $n$th order forward differences converge when we make assumptions on $f$ that imply analyticity. However, analyticity is not enough, as shown in \cite[Example 2]{flajoletsedgewick}.

\begin{definition}
    For $m\in\NN$, we write $\delta^{(m)}$ to denote the $m$th distributional derivative of the Dirac delta function.
\end{definition}

\begin{theorem}
    Assuming it exists, suppose that $\invlaplace f$ is a linear combination of Lebesgue measurable functions and elements of the set $\left\{\delta^{(m)}(t-a):m\in\NN,a\in\RR^{\geq0}\right\}$. Let $z\in\mathcal{A}_{L}(f)$ and $h\in\RR^+$. Then
    \begin{align*}
        \lim_{n\to\infty}\Delta_{h}^{n}[f](z)=0.
    \end{align*}
\end{theorem}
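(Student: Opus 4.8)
The plan is to apply Lemma 1 and then dominate the resulting integral. Write $\invlaplace f = g + \sum_{j=1}^{N} c_j \delta^{(m_j)}(t - a_j)$ where $g$ is Lebesgue measurable and $a_j \in \RR^{\geq 0}$, $m_j \in \NN$. By linearity of the forward difference operator and of Lemma 1 (noting $\mathcal{A}_L(f) \subseteq \mathcal{C}_L(f)$, so the integral formula applies), it suffices to show that $\lim_{n\to\infty}\int_0^\infty e^{-zt}(e^{-ht}-1)^n \invlaplace f(t)\,dt = 0$ for each piece separately. The key pointwise observation is that for every fixed $t > 0$ and $h > 0$ we have $|e^{-ht} - 1| < 1$, so $(e^{-ht}-1)^n \to 0$ as $n \to \infty$; at $t = 0$ the factor $(e^{-ht}-1)^n$ vanishes identically for $n \geq 1$.

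For the measurable part $g$, the first step is to observe that since $z \in \mathcal{A}_L(f)$, the function $e^{-zt} g(t)$ is in $L^1(0,\infty)$. Then $|e^{-zt}(e^{-ht}-1)^n g(t)| \leq |e^{-zt} g(t)|$ for all $n$ (since $|e^{-ht}-1| \leq 1$ on $[0,\infty)$), and the integrand tends to $0$ pointwise a.e., so the Dominated Convergence Theorem gives $\int_0^\infty e^{-zt}(e^{-ht}-1)^n g(t)\,dt \to 0$. For each distributional piece $\delta^{(m)}(t-a)$, the corresponding contribution to $\Delta_h^n[f](z)$ is, by the action of $\delta^{(m)}$, equal to $(-1)^m$ times the $m$th derivative in $t$ of $e^{-zt}(e^{-ht}-1)^n$ evaluated at $t = a$ (with appropriate sign/limit conventions at $a = 0$). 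The second step is therefore to show this derivative tends to $0$. Expanding by the Leibniz rule, it is a finite sum of terms each involving a derivative of $(e^{-ht}-1)^n$; differentiating $p$ times lowers the power to $(e^{-ht}-1)^{n-p}$ times a bounded trigonometric-exponential factor with polynomial-in-$n$ coefficients (coming from $\binom{n}{\cdot}$-type factors and chain rule). Since $m$ is fixed while $n \to \infty$, each such term is bounded by $C n^m |e^{-ha}-1|^{n-m} e^{-\Re(z) a}$, which tends to $0$ because $|e^{-ha}-1| < 1$ for $a > 0$ (and vanishes for $a = 0$); here one should note the hypothesis $z \in \mathcal{A}_L(f)$ forces $a > 0$ unless the delta contributes trivially, or more carefully, that $\Re(z) > 0$ on the region of absolute convergence so the exponential factor is harmless.

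The main obstacle I anticipate is making the distributional part fully rigorous: one must justify that pairing $\invlaplace f$ against the kernel $e^{-zt}(e^{-ht}-1)^n$ (which is smooth but only defined on $[0,\infty)$, not compactly supported) is legitimate, and handle the boundary case $a = 0$ where the test "function" is evaluated at the edge of the domain. This is presumably where the precise definition of the inverse Laplace transform as a distribution and the convention for $\delta^{(m)}$ supported at the origin must be invoked; I would expect the paper to either restrict to $a > 0$ in spirit or to interpret the integral via a limiting argument. The remaining estimates — uniform bound $|e^{-ht}-1| \leq 1$, the polynomial-times-geometric decay $n^m r^{n}$ with $r < 1$ — are routine once the framework is set up.
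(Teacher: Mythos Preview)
Your proposal is correct and follows the same overall architecture as the paper: split $\invlaplace f$ into a measurable piece and a finite sum of shifted delta derivatives, handle the measurable piece via Lemma 1 plus dominated convergence (exactly as the paper does), and show the delta contributions decay like $n^m r^n$ with $r=|e^{-ha}-1|<1$.

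The one genuine difference is in how the distributional terms are treated. You stay on the $t$-side, pairing $\delta^{(m)}(t-a)$ against the kernel $e^{-zt}(e^{-ht}-1)^n$ and differentiating in $t$ via Leibniz. The paper instead takes the Laplace transform of each $\delta^{(k)}(t-c_k)$ first, obtaining the explicit function $p(z)=\sum_k b_k e^{-c_k z}z^k$, and then computes $\Delta_h^n[p](z)$ directly in the $z$-variable (expanding $(z+kh)^N$ by the binomial theorem and recognizing $\sum_k\binom{n}{k}(-1)^{n-k}e^{-c_Nkh}(kh)^b$ as $(-1)^b\frac{d^b}{dt^b}(e^{-th}-1)^n$). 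The two computations are essentially dual to each other and yield the same $O(n^N(e^{-c_Nh}-1)^n)$ bound, but the paper's route neatly sidesteps exactly the obstacle you flag: by moving to the $z$-domain it never needs to justify pairing a tempered distribution on $[0,\infty)$ against a non-compactly-supported test function, nor worry about boundary conventions at $a=0$ (the case $a=0$ becomes the trivial statement that forward differences of polynomials eventually vanish). Your aside that ``$\Re(z)>0$ on the region of absolute convergence'' is not generally true and is not needed; the paper's argument makes no such claim. Also note, as the paper does explicitly, that $z\in\mathcal{A}_L(\laplace m)$ follows from $z\in\mathcal{A}_L(f)$ and $z\in\CC=\mathcal{A}_L(p)$ --- a small point you gloss over.
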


\begin{proof}
    Let $z\in\mathcal{A}_{L}(f)$. Let $h\in\RR^+$. Suppose that $\invlaplace f$ is a linear combination of Lebesgue measurable functions and elements of the set $\left\{\delta^{(m)}(t-a):m\in\NN,a\in\RR^{\geq0}\right\}$; that is,
    \begin{align*}
        \invlaplace f(t)=\sum_{k=0}^{u}a_k g_k(t)+\sum_{k=0}^{j}b_k\delta^{(k)}(t-c_k),
    \end{align*}
    where $u,j\in\NN$, each $g_k$ is a Lebesgue measurable function, $(a_k),(b_k)$ are sequences from $\CC$, and $(c_k)$ is a sequence from $\RR^{\geq0}$. As a finite sum of Lebesgue measurable functions is Lebesgue measurable, without loss of generality we have that
    \begin{align*}
        \invlaplace f(t)=m(t)+\sum_{k=0}^{j}b_k\delta^{(k)}(t-c_k)
    \end{align*}
    for some Lebesgue measurable function $m$. We know that $\laplace m$ exists at $z$ since $m$ is a difference of functions with a Laplace transform existing at $z$. So
    \begin{align*}
        f(z)=\laplace m(z)+\sum_{k=0}^{j}b_k e^{-c_k z} z^k.
    \end{align*}
    Let $p(z)=\sum_{k=0}^{j}b_k e^{-c_k z} z^k$. Then
     \begin{align*}
         \Delta_{h}^{n}[f](z)=\Delta_{h}^{n}[\laplace m](z)+\Delta_{h}^{n}[p](z).
     \end{align*}
     Let $N\in\{0,1,\cdots,j\}$. We now argue that $\Delta^{n}_{h}[e^{-c_N z}z^N](z)$ converges to zero. We have that
     \begin{align*}
         \Delta^{n}_{h}[e^{-c_N z}z^N](z)&=\sum_{k\geq0}{n\choose k}(-1)^{n-k} e^{-c_{N} (z+kh)} (z+kh)^N\\
         &=e^{-c_{N} z}\sum_{b\geq0}{N\choose b}z^{N-b}\sum_{k\geq0}{n\choose k}(-1)^{n-k}e^{-c_N kh}(kh)^{b}.
     \end{align*}
     Note
     \begin{align*}
         \sum_{k\geq0}{n\choose k}(-1)^{n-k}e^{-t kh}(kh)^{b}=(-1)^b\frac{d^b}{dt^{b}}\left(e^{-th}-1\right)^n=O(n^b(e^{-th}-1)^n).
     \end{align*}
     Thus by taking $t=c_N$,
     \begin{align*}
         \Delta^{n}_{h}[e^{-c_N z}z^N](z)&=e^{-c_{N} z}\sum_{b\geq0}{N\choose b}z^{N-b}O(n^b(e^{-c_Nh}-1)^n)\\
         &=O\left((e^{-c_Nh}-1)^n\right)\sum_{b\geq0}{N\choose b}z^{N-b}O(n^b)\\
         &=O\left((e^{-c_Nh}-1)^n\right)O\left(n^N\right)=O\left(n^N(e^{-c_Nh}-1)^n\right).
     \end{align*}
     As $|e^{-c_Nh}-1|<1$, it follows that $\lim_{n\to\infty}\Delta^{n}_{h}[e^{-c_N z}z^N](z)=0$. Thus, $\lim_{n\to\infty}\Delta^{n}_{h}[p](z)=0$.

    We now argue that $\Delta^{n}_{h}[\laplace m](z)$ converges to zero. As $z\in\mathcal{C}_{L}(\laplace m)$, by Lemma 1 we have that
    \begin{align*}
        \Delta_{h}^{n}[\laplace m](z)=\int_{0}^{\infty}e^{-zt}(e^{-ht}-1)^n m(t)dt.
    \end{align*}
    Note $|e^{-ht}-1|<1$ for all all $t>-h^{-1}\log2$, so $(e^{-ht}-1)^n$ converges pointwise to $0$ for all $t\geq0$. Let $j_n(t)\defeq(e^{-ht}-1)^ne^{-zt} m(t)$. We have that
    \begin{align*}
        |j_n(t)|\leq|e^{-zt}m(t)|
    \end{align*}
    for all $t\in\RR^{+}$ and $n\in\NN$. Note that since $m$ is Lebesgue measurable, $e^{-zt}m(t)$ and each $j_n$ are Lebesgue measurable. As $z\in\mathcal{A}_{L}(f)$ and $z\in\CC=\mathcal{A}_{L}(p)$, we have that $z\in\mathcal{A}_{L}(\laplace m)$. So $e^{-zt}m(t)$ is absolutely integrable on $\RR^{+}$. As $j_n(t)$ converges pointwise to $0$ for all $t\geq0$, by the dominated convergence theorem we have that
    \begin{align*}
        \lim_{n\to\infty}\Delta_{h}^{n}[\laplace m](z)=\lim_{n\to\infty}\int_{0}^{\infty}j_n(t)dt=0.
    \end{align*}
    Thus, 
     \begin{align*}
         \lim_{n\to\infty}\Delta_{h}^{n}[f](z)=\lim_{n\to\infty}\Delta_{h}^{n}[\laplace m](z)+\lim_{n\to\infty}\Delta_{h}^{n}[p](z)=0,
     \end{align*}
     as desired.
\end{proof}

Note that the assumption that $z\in\mathcal{A}_{L}(f)$ is necessary; $z\in\mathcal{C}_{L}(f)$ is not enough to guarantee convergence to zero. Indeed, consider the $n$th order forward differences of $f(z)=(1+z^2)^{-1/2}$ at $0$. By \cite[Appendix B]{laplacebessel}, we have that $\invlaplace f(t)=J_{0}(t)$, where $J_0$ is the zeroth order Bessel function of the first kind, and $0\in\mathcal{C}_{L}(f)$ since $J_0$ is integrable on $\RR^+$ per \cite[\href{https://dlmf.nist.gov/10.22\#E41}{Equation 10.22.41}]{NIST:DLMF}. However, \cite[pg.119]{flajoletsedgewick} showed that for some $c_1,c_2\in\RR$,
\begin{align*}
    (-1)^n\Delta^{n}[f](0)=\sum_{k\geq0}{n\choose k}\frac{(-1)^k}{\sqrt{k^2+1}}=1+c_1\sqrt{\log n}\cos(\log n+c_2)+O\left((\log n)^{-1/2}\right),
\end{align*}
which diverges to infinity. 

\begin{cor}
    Assuming it exists, suppose that $\invlaplace g$ is a linear combination of Lebesgue measurable functions and elements of the set $\left\{\delta^{(m)}(t-a):a\in\RR^{\geq0},m\in\NN\right\}$. Let $z\in\CC$ such that $-z\in \mathcal{A}_{L}(g)$. Let $h\in\RR^+$. Let $f(t)=g(-t)$. Then
    \begin{align*}
        \lim_{n\to\infty}\nabla_{h}^{n}[f](z)=0.
    \end{align*}
\end{cor}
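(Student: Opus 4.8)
The plan is to reduce the backward-difference statement to Theorem 1 by relating $\nabla_h^n[f](z)$ to a forward difference of $g$. Writing out the definition, $\nabla_h^n[f](z) = \sum_{k=0}^n \binom{n}{k}(-1)^k f(z-kh) = \sum_{k=0}^n \binom{n}{k}(-1)^k g(-(z-kh)) = \sum_{k=0}^n \binom{n}{k}(-1)^k g(-z+kh)$. Comparing with $\Delta_h^n[g](-z) = \sum_{k=0}^n \binom{n}{k}(-1)^{n-k} g(-z+kh)$, I see that $\nabla_h^n[f](z) = (-1)^n \Delta_h^n[g](-z)$. Hence $\lim_{n\to\infty}\nabla_h^n[f](z) = 0$ if and only if $\lim_{n\to\infty}\Delta_h^n[g](-z) = 0$.

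Next I would check that the hypotheses of Theorem 1 are met for $g$ at the point $-z$. By assumption, $\invlaplace g$ exists and is a linear combination of Lebesgue measurable functions and distributional derivatives $\delta^{(m)}(t-a)$ with $a \in \RR^{\geq 0}$, which is exactly the structural hypothesis Theorem 1 requires of $\invlaplace g$. We are also given $-z \in \mathcal{A}_L(g)$, which is precisely the absolute-convergence hypothesis of Theorem 1 applied to $g$ at the point $-z$. And $h \in \RR^+$ is as required. So Theorem 1 applies directly to $g$ at $-z$, yielding $\lim_{n\to\infty}\Delta_h^n[g](-z) = 0$.

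Combining the two observations, $\lim_{n\to\infty}\nabla_h^n[f](z) = \lim_{n\to\infty}(-1)^n \Delta_h^n[g](-z) = 0$, since the factor $(-1)^n$ has modulus $1$ and the other factor tends to $0$. This completes the argument. There is essentially no obstacle here: the only thing to be careful about is the bookkeeping of signs and the substitution $t \mapsto -t$ when relating $f$ and $g$, together with making sure the region hypothesis is stated for $g$ at $-z$ (not for $f$ at $z$), so that Theorem 1 can be invoked verbatim. I would present it as a three-line computation followed by a citation of Theorem 1.
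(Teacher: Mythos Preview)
Your proposal is correct and follows exactly the same approach as the paper: the paper's proof consists of the single identity $\nabla_h^n[f](z) = (-1)^n \Delta_h^n[g](-z)$ followed by an appeal to Theorem~1. Your write-up simply makes the sign bookkeeping and the verification of Theorem~1's hypotheses explicit, which the paper leaves to the reader.
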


\begin{proof}
    This result follows immediately from Theorem 1 by noting that
    \begin{align*}
        \nabla^{n}_{h}[f](z)=(-1)^n\Delta^{n}_{h}[g](-z).
    \end{align*}
\end{proof}

The second application of Lemma 1 allows us to guarantee the existence and convergence of certain Newton series, which can be a delicate task otherwise.

\begin{theorem}
    Assuming it exists, suppose that $\invlaplace f$ is a linear combination of Lebesgue measurable functions and elements of the set $\left\{\delta^{(m)}(t-a):m\in\NN,a\in[0,\log2)\right\}$. Let $z_0\in\mathcal{A}_{L}(f)$. Then the Newton series of $f$ centered at $z_0$ exists and converges in the half-plane $\Re(z)>\Re(z_0)$.
\end{theorem}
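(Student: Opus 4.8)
The plan is to exploit the linearity of the Newton series together with the integral representation of Lemma~1. As in the proof of Theorem~1, I would begin by writing
\[
\invlaplace f(t)=m(t)+\sum_{k=0}^{j}b_k\,\delta^{(k)}(t-c_k),
\]
with $m$ a Lebesgue measurable function and each $c_k\in[0,\log 2)$, so that $f=\laplace m+p$ where $p(z)=\sum_{k=0}^{j}b_k z^{k}e^{-c_k z}$. Since $g\mapsto\sum_{k\ge 0}\binom{z-z_0}{k}\Delta^{k}[g](z_0)$ is linear, it is enough to treat $\laplace m$ and each monomial--exponential $z\mapsto z^{N}e^{-cz}$ ($N\in\NN$, $c\in[0,\log 2)$) separately and add. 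I would also record the elementary asymptotic for generalized binomial coefficients: with $w=z-z_0$, $\bigl|\binom{w}{k}\bigr|=\prod_{\ell=1}^{k}\bigl|1-\tfrac{w+1}{\ell}\bigr|$, so $\log\bigl|\binom{w}{k}\bigr|=-(\Re(w)+1)\log k+O(1)$ and thus $\bigl|\binom{w}{k}\bigr|=O\bigl(k^{-1-\Re(w)}\bigr)$; consequently $\sum_{k\ge 0}\bigl|\binom{w}{k}\bigr|<\infty$ whenever $\Re(w)>0$, while for any fixed $w$ the sequence $\bigl|\binom{w}{k}\bigr|$ grows at most polynomially in $k$.

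For the term $\laplace m$: as $z_0\in\mathcal{A}_{L}(f)$, removing the distributional summands $p$ leaves $z_0\in\mathcal{A}_{L}(\laplace m)\subseteq\mathcal{C}_{L}(\laplace m)$, so Lemma~1 yields $\Delta^{k}[\laplace m](z_0)=\int_{0}^{\infty}e^{-z_0 t}(e^{-t}-1)^{k}m(t)\,dt$; in particular the Newton coefficients exist, and since $|e^{-t}-1|\le 1$ on $\RR^{\ge 0}$ each is bounded in absolute value by $M:=\int_{0}^{\infty}|e^{-z_0 t}m(t)|\,dt<\infty$. Hence for $\Re(z)>\Re(z_0)$ the Newton series of $\laplace m$ is dominated term-by-term by $M\bigl|\binom{z-z_0}{k}\bigr|$, so it converges absolutely; moreover the same bound gives $\sum_{k}\bigl|\binom{z-z_0}{k}\bigr|\int_{0}^{\infty}|e^{-z_0 t}(e^{-t}-1)^{k}m(t)|\,dt\le M\sum_k\bigl|\binom{z-z_0}{k}\bigr|<\infty$, so Fubini permits interchanging the sum with the integral, and the binomial identity $\sum_{k\ge 0}\binom{w}{k}(e^{-t}-1)^{k}=(e^{-t})^{w}=e^{-wt}$ (valid since $|e^{-t}-1|<1$ for $t>0$) collapses the series to $\int_{0}^{\infty}e^{-zt}m(t)\,dt=\laplace m(z)$.

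For a term $z^{N}e^{-cz}$: if $c=0$ this is a polynomial of degree $N$, whose Newton series terminates and represents it identically, so suppose $c\in(0,\log 2)$. The computation already carried out inside the proof of Theorem~1 (specialized to $h=1$, $z=z_0$) gives $\Delta^{k}[z^{N}e^{-cz}](z_0)=O\bigl(k^{N}(e^{-c}-1)^{k}\bigr)$, and $c<\log 2$ forces $|e^{-c}-1|<\tfrac12$; hence the general term of this Newton series is $O\bigl(\bigl|\binom{z-z_0}{k}\bigr|\,k^{N}|e^{-c}-1|^{k}\bigr)$, which is summable for every $z$ because geometric decay beats the polynomial growth of both $\bigl|\binom{z-z_0}{k}\bigr|$ and $k^{N}$. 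Differentiating the identity $\sum_{k\ge 0}\binom{w}{k}(e^{-t}-1)^{k}=e^{-wt}$ term by term in $t$ near $t=c$ --- legitimate because the series and its $t$-derivatives converge uniformly on a complex neighborhood of $c$ where $|e^{-t}-1|<1$ --- identifies the sum as $z^{N}e^{-cz}$.

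Adding the contributions shows the Newton series of $f$ centered at $z_0$ has finite coefficients $\Delta^{k}[f](z_0)=\Delta^{k}[\laplace m](z_0)+\Delta^{k}[p](z_0)$ and converges (indeed to $f(z)$) throughout $\Re(z)>\Re(z_0)$. I expect the crux --- and the main obstacle --- to be the two points underlying the $\laplace m$ case: establishing the asymptotic $\bigl|\binom{z-z_0}{k}\bigr|=O(k^{-1-\Re(z-z_0)})$ (hence absolute summability of the binomial coefficients precisely on the stated half-plane) and using it to justify the Fubini interchange; the distributional summands then require only the more routine bookkeeping of term-by-term $t$-differentiation at the points $c_k$, where the hypothesis $c_k\in[0,\log 2)$ supplies the geometric rate $|e^{-c_k}-1|<\tfrac12$.
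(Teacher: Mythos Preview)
Your proof is correct and shares the paper's overall architecture: split $f=\laplace m+p$, handle $\laplace m$ via Lemma~1 and a Fubini interchange justified by absolute summability of $\bigl|\binom{z-z_0}{k}\bigr|$, and treat $p$ separately. The treatment of $\laplace m$ is essentially identical to the paper's. The differences lie in the two auxiliary ingredients. For the summability of $\sum_k\bigl|\binom{z-z_0}{k}\bigr|$ on $\Re(z)>\Re(z_0)$, the paper simply cites a reference, whereas you derive the asymptotic $\bigl|\binom{w}{k}\bigr|\asymp k^{-1-\Re(w)}$ directly from the product form. For the distributional part $p(z)=\sum_k b_k z^k e^{-c_k z}$, the paper observes that $p(\,\cdot+z_0)$ is entire of exponential type less than $\log 2$ and invokes a cited Carlson-type theorem guaranteeing that such functions equal their Newton series on all of $\CC$; you instead reuse the explicit formula for $\Delta^k[z^N e^{-cz}](z_0)$ from the proof of Theorem~1 and identify the resulting sum by term-by-term $t$-differentiation of the binomial identity $\sum_k\binom{w}{k}(e^{-t}-1)^k=e^{-wt}$ near $t=c$. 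Your route is more self-contained; the paper's is shorter once the citations are granted. A pleasant byproduct: your argument for each monomial--exponential term needs only $|e^{-c}-1|<1$, i.e.\ $c\ge 0$, and never actually uses $c<\log 2$ --- that restriction enters the paper's proof only through the exponential-type theorem it quotes.
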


\begin{proof}
    Let $z_0\in\mathcal{A}_{L}(f)$. Let $z\in\CC:\Re(z)>\Re(z_0)$. Suppose that $\invlaplace f$ is a linear combination of Lebesgue measurable functions and elements of the set $\left\{\delta^{(m)}(t-a):m\in\NN,a\in[0,\log2)\right\}$. So, without loss of generality, we have that
    \begin{align*}
        \invlaplace f(t)=m(t)+\sum_{k=0}^{j}a_k\delta^{(k)}(t-b_k),
    \end{align*}
    where $j\in\NN$, $(a_k)\subseteq\CC$, $(b_k)\subseteq[0,\log2)$, and $m$ is some Lebesgue measurable function. We know that $\laplace m$ exists at $z_0$ since $m$ is a difference of functions with a Laplace transform existing at $z_0$. So
    \begin{align*}
        f(z_0)=\laplace m(z_0)+\sum_{k=0}^{j}a_k e^{-b_k z_0}z_0^k.
    \end{align*}
    Let $p(z)=\sum_{k=0}^{j}a_k e^{-b_k z}z^k$. Then
     \begin{align*}
         \Delta^{k}[f](z_0)=\Delta^{k}[\laplace m](z_0)+\Delta^{k}[p](z_0).
     \end{align*}
    So
    \begin{align*}
        \sum_{k\geq0}{z-z_0\choose k}\Delta^{k}[f](z_0)=\sum_{k\geq0}{z-z_0\choose k}\Delta^{k}[\laplace m](z_0)+\sum_{k\geq0}{z-z_0\choose k}\Delta^{k}[p](z_0).\\
    \end{align*}
    Let $g(z)=p(z+z_0)$. Note that $g$ is entire and is of exponential type less than $\log2$ since each $|b_k|<\log2$. Thus by \cite[pg.127]{entireandexptype}, $g(z)=\sum_{k\geq0}{z\choose k}\Delta^{k}[g](0)=\sum_{k\geq0}{z\choose k}\Delta^{k}[p](z_0)$ for all $z\in\CC$. So $p(z)=g(z-z_0)=\sum_{k\geq0}{z-z_0\choose k}\Delta^{k}[p](z_0)$.
     Thus,
     \begin{align*}
         \sum_{k\geq0}{z-z_0\choose k}\Delta^{k}[f](z_0)=p(z)+\sum_{k\geq0}{z-z_0\choose k}\Delta^{k}[\laplace m](z_0).
     \end{align*}
     
     We now argue that $\sum_{k\geq0}{z-z_0\choose k}\Delta^{k}[\laplace m](z_0)=\laplace m(z)$. Since $z_0\in\mathcal{C}_{L}(\laplace m)$, we have by Lemma 1 that
    \begin{align*}
        \sum_{k\geq0}{z-z_0\choose k}\Delta^{k}[\laplace m](z_0)=\sum_{k\geq0}{z-z_0\choose k}\int_{0}^{\infty}(e^{-t}-1)^ke^{-z_0t}m(t)dt.
    \end{align*}
    We now justify the interchange of the sum and integral. As $m$ is Lebesgue measurable, it is sufficient to show that
    \begin{align*}
        \int_{0}^{\infty}\left|e^{-z_0t}m(t)\right|\sum_{k\geq0}\left|{z-z_0\choose k}(e^{-t}-1)^k\right|dt<\infty.
    \end{align*}
    For all $t\geq0$, $|e^{-t}-1|<1$; thus,
    \begin{align*}
        \sum_{k\geq0}\left|{z-z_0\choose k}(e^{-t}-1)^k\right|\leq\sum_{k\geq0}\left|{z-z_0\choose k}\right|\eqdef T(z,z_0).
    \end{align*}
    Thus,
    \begin{align*}
         \int_{0}^{\infty}\left|e^{-z_0t}m(t)\right|\sum_{k\geq0}\left|{z-z_0\choose k}(e^{-t}-1)^k\right|dt\leq \int_{0}^{\infty}T(z,z_0)\left|e^{-z_0t}m(t)\right|dt.
    \end{align*}
    The series $T(z,z_0)$ is known to converge whenever $\Re(z)>\Re(z_0)$; see \cite{binomialseries}. As $z_0\in\mathcal{A}_{L}(f)$ and $z_0\in\CC=\mathcal{A}_{L}(p)$, we have that $z_0\in\mathcal{A}_{L}(\laplace m)$. So, as $T(z,z_0)$ is constant with respect to $t$,
    \begin{align*}
        \int_{0}^{\infty}T(z,z_0)\left|e^{-z_0t}m(t)\right|dt<\infty.
    \end{align*}
    So
    \begin{align*}
        \int_{0}^{\infty}\left|e^{-z_0t}m(t)\right|\sum_{k\geq0}\left|{z-z_0\choose k}(e^{-t}-1)^k\right|dt<\infty.
    \end{align*}
    Thus, we can interchange the sum and integral; consequently,
    \begin{align*}
        &\sum_{k\geq0}{z-z_0\choose k}\int_{0}^{\infty}(e^{-t}-1)^ke^{-z_0t}m(t)dt\\
        &=\int_{0}^{\infty}e^{-z_0t}m(t)\sum_{k\geq0}{z-z_0\choose k}(e^{-t}-1)^kdt\\
        &=\int_{0}^{\infty}e^{-z_0t}m(t)\left((e^{-t}-1)+1\right)^{z-z_0}dt\\
        &=\int_{0}^{\infty}e^{-zt}m(t)dt.
    \end{align*}
    Since $z_0\in\mathcal{A}_{L}(\laplace m)\subseteq\mathcal{C}_{L}(\laplace m)$ and $\Re(z)>\Re(z_0)$,
    \begin{align*}
        \int_{0}^{\infty}e^{-zt}m(t)dt=\laplace m(z).
    \end{align*}
    So
    \begin{align*}
        \sum_{k\geq0}{z-z_0\choose k}\Delta^{k}[\laplace m](z_0)=\laplace m(z).
    \end{align*}
    Thus,
     \begin{align*}
         \sum_{k\geq0}{z-z_0\choose k}\Delta^{k}[f](z_0)=p(z)+\laplace m(z)=f(z),
     \end{align*}
     as desired.
\end{proof}

\begin{cor}
    Let $R$ be a rational function with real coefficients. Let $S$ be the set of poles of $R$. Let $z_0\in\CC$ such that $\Re(z_0)>\max\{0,\Re(\sigma):\sigma\in S\}$. Then the Newton series of $R$ centered at $z_0$ exists and converges in the half-plane $\Re(z)>\Re(z_0)$. 
\end{cor}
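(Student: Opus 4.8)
The plan is to deduce this directly from Theorem 2. Taking $f=R$, I would check the two hypotheses of that theorem — that $\invlaplace R$ is a linear combination of Lebesgue measurable functions and elements of $\{\delta^{(m)}(t-a):m\in\NN,\ a\in[0,\log2)\}$, and that $z_0\in\mathcal{A}_L(R)$ — both via the partial fraction decomposition of $R$.

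First I would write $R(z)=q(z)+\sum_{\sigma\in S}\sum_{j=1}^{r_\sigma}\dfrac{c_{\sigma,j}}{(z-\sigma)^j}$, where $q(z)=\sum_{k=0}^{d}a_k z^k$ is the polynomial produced by division (with $q\equiv0$ when $R$ is proper) and $r_\sigma$ is the order of the pole $\sigma$. Using $\laplace[\delta^{(k)}(t)](z)=z^k$ and $\laplace\bigl[\tfrac{t^{j-1}}{(j-1)!}e^{\sigma t}\bigr](z)=(z-\sigma)^{-j}$, this gives
\[
  \invlaplace R(t)=m(t)+\sum_{k=0}^{d}a_k\,\delta^{(k)}(t),\qquad m(t)\defeq\sum_{\sigma\in S}\sum_{j=1}^{r_\sigma}\frac{c_{\sigma,j}\,t^{j-1}}{(j-1)!}\,e^{\sigma t}.
\]
Since $m$ is a finite linear combination of continuous functions it is Lebesgue measurable, and every Dirac term sits at $a=0\in[0,\log2)$, so $\invlaplace R$ has exactly the form required by Theorem 2.

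To verify $z_0\in\mathcal{A}_L(R)$ I would argue as in the proof of Theorem 2: splitting $\invlaplace R$ into its measurable part $m$ and its Dirac part (whose Laplace region of absolute convergence is all of $\CC$), membership reduces to $\int_0^\infty\bigl|e^{-z_0t}m(t)\bigr|\,dt<\infty$. Because $\bigl|e^{-z_0t}m(t)\bigr|\le\sum_{\sigma,j}\tfrac{|c_{\sigma,j}|\,t^{j-1}}{(j-1)!}\,e^{(\Re(\sigma)-\Re(z_0))t}$ and $\Re(z_0)>\Re(\sigma)$ for every $\sigma\in S$ by hypothesis, each summand is a polynomial times a strictly decaying exponential, hence integrable on $[0,\infty)$; a finite sum of such is integrable. (If $S=\emptyset$ then $R$ is a polynomial, $m\equiv0$, and $\mathcal{A}_L(R)=\CC$.) Thus $z_0\in\mathcal{A}_L(R)$, and Theorem 2 gives that the Newton series of $R$ centered at $z_0$ exists and converges throughout $\Re(z)>\Re(z_0)$.

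I do not expect a genuine obstacle; the work is essentially bookkeeping on top of Theorem 2. The two points needing care are that the polynomial part of $R$ contributes only Dirac derivatives at $0$, which lies safely inside $[0,\log2)$ so that those terms are admissible, and that the hypothesis $\Re(z_0)>\max\{0,\Re(\sigma):\sigma\in S\}$ is precisely what dominates the exponentials $e^{\sigma t}$ in $\invlaplace R$ and so places $z_0$ in the region of absolute convergence. The hypothesis that $R$ has real coefficients is not used in the argument; it only forces non-real poles into conjugate pairs, making $\invlaplace R$ real-valued.
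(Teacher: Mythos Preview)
Your proposal is correct and follows essentially the same strategy as the paper---partial fractions plus Theorem 2---but with one streamlining worth noting. The paper splits $R=p+\sum_k P_k/Q_k$, handles the polynomial $p$ by a separate citation (entire functions of exponential type $<\log 2$ admit convergent Newton series everywhere), and applies Theorem 2 only to the proper pieces, invoking \cite[Theorem 4.1]{rationalinvlaplace} to express each $\invlaplace(P_k/Q_k)$ as combinations of $t^{k_1}e^{k_2 t}\sin(k_3 t)$, $t^{j_1}e^{j_2 t}\cos(j_3 t)$. You instead apply Theorem 2 once to $R$ itself, observing that the polynomial part contributes only $\delta^{(k)}(t)$ terms at $a=0\in[0,\log 2)$, which are already admissible in Theorem 2's hypothesis. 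This is more economical and avoids the extra citation. Your remark that the real-coefficient hypothesis is unused is also apt: the paper's route relies on it (via the sine/cosine form of the cited inverse Laplace result), whereas your complex-exponential partial fractions work verbatim for complex coefficients.
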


\begin{proof}
    Let $R=P/Q$ be a rational function with real coefficients. Let $S$ be the set of poles of $R$. Let $z_0\in\CC$ such that $\Re(z_0)>\max\{0,\Re(\sigma):\sigma\in S\}$. Let us decompose $R$ as
    \begin{align*}
        R(z)=p(z)+\sum_{k}\frac{P_k(z)}{Q_k(z)},
    \end{align*}
    where $p$ is a polynomial, and each $P_k/Q_k$ is a proper\footnote{A rational function $p/q$ is said to be \emph{proper} if $\deg(p)<\deg(q)$.} rational function. By this decomposition, it is sufficient to show that the Newton series of $p$ and each $P_k/Q_k$ exist and converge for all $z\in\CC:\Re(z)>\Re(z_0)$.

    A Newton series of any polynomial centered anywhere exists and converges in all of $\CC$. This follows from \cite[pg.127]{entireandexptype} since all polynomials and entire and are of exponential type less than $\log2$. Thus, the Newton series of $p$ centered at $z_0$ exists and converges for all $z\in\CC:\Re(z)>\Re(z_0)$.

    As each $P_k/Q_k$ is a proper rational function with real coefficients, by \cite[Theorem 4.1]{rationalinvlaplace} each $\invlaplace (P_k/Q_k)(t)$ is a finite linear combination of functions from the set
    \begin{align*}
        \left\{t^{k_1}e^{k_2t}\sin(k_3t),t^{j_1}e^{j_2t}\cos(j_3t)\bigg|k_1,j_1\in\NN,k_2,j_2\leq\max\{0,\Re(\sigma):\sigma\in S\},k_3,j_3\in\RR\right\}.
    \end{align*}
    Note that all of these functions are Lebesgue measurable. Taking $k_1,j_1\in\NN,k_2,j_2\leq\max\{0,\Re(\sigma):\sigma\in S\}$, and $k_3,j_3\in\RR$ as seen in the above set, we see that since $k_2-\Re(z_0)<0$,
    \begin{align*}
        \int_{0}^{\infty}|t^{k_1}e^{k_2t}\sin(k_3t)e^{-z_0t}|dt\leq\int_{0}^{\infty}t^{k_1}e^{\left(k_2-\Re (z_0)\right)t}dt<\infty.
    \end{align*}
    The case with cosine is identical; thus, by linearity, $z_0\in\mathcal{A}_{L}(P_k/Q_k)$ for each $k$. Thus by Theorem 2, the Newton series of each $P_k/Q_k$ centered at $z_0$ exists and converges for all $z\in\CC:\Re(z)>\Re(z_0)$. Thus, the Newton series of $R$ centered at $z_0$ exists and converges for all $z\in\CC:\Re(z)>\Re(z_0)$.
\end{proof}

The region of convergence claimed in Corollary 2 may be extended if $R$ has a linear denominator; see \cite[Theorem 1]{entireandexptype}.

\section{Analysis of Fourier transform integral formulas}

We now use Fourier transform integral formulas to yield results about the asymptotic behavior of finite differences (and eventually general binomial sums).

\begin{definition}
    In this article, we use the following convention for the definition of the Fourier transform of a function $f$ with suitable conditions:
    \begin{align*}
        \fourier f(z)\defeq\int_{\RR}f(x)e^{-2\pi izx}dx.
    \end{align*}
\end{definition}

\begin{lemma}
    Suppose $f$ has an inverse Fourier transform. Let $n\in\NN$, $h\in\RR^+$, and $z\in\CC$. Then
    \begin{flalign*}
        \delta^{n}_{h}[f](z)&=\int_{\RR}(1-e^{2\pi ihx})^ne^{-i\pi hxn}e^{-2\pi izx}\invfourier f(x)dx,\\
        \Delta^{n}_{h}[f](z)&=\int_{\RR}(e^{-2\pi ihx}-1)^ne^{-2\pi izx}\invfourier f(x)dx.
    \end{flalign*}
\end{lemma}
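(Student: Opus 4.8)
The plan is to mirror the proof of Lemma~1, replacing the Laplace inversion pair with the Fourier inversion pair. Write $g\defeq\invfourier f$, so that the hypothesis that $f$ has an inverse Fourier transform gives $\fourier g=f$; concretely, for every $w$ at which this representation is valid,
\begin{align*}
    f(w)=\int_{\RR}g(x)e^{-2\pi iwx}\,dx.
\end{align*}
First I would substitute this integral representation for $f$ into the definitions of $\Delta^{n}_{h}[f](z)$ and $\delta^{n}_{h}[f](z)$, evaluating at the shifted arguments $w=z+kh$ and $w=z+\left(\tfrac{n}{2}-k\right)h$ respectively.

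Next, since the outer sum in each finite difference has only $n+1$ terms, I can interchange it with the integral by linearity alone — no convergence theorem is needed, and this is the only point where any care is required. Pulling the common factor $e^{-2\pi izx}$ out front (and, in the central case, extracting the further $k$-independent factor $e^{-2\pi i(n/2)hx}=e^{-i\pi hxn}$ coming from the shift $z+\left(\tfrac{n}{2}-k\right)h$), I am left inside the integral with $\sum_{k=0}^{n}{n\choose k}(-1)^{n-k}\left(e^{-2\pi ihx}\right)^{k}$ for the forward difference and $e^{-i\pi hxn}\sum_{k=0}^{n}{n\choose k}\left(-e^{2\pi ihx}\right)^{k}$ for the central difference.

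Finally I apply the binomial theorem to each inner sum, namely $\sum_{k=0}^{n}{n\choose k}(-1)^{n-k}\left(e^{-2\pi ihx}\right)^{k}=(e^{-2\pi ihx}-1)^{n}$ and $\sum_{k=0}^{n}{n\choose k}\left(-e^{2\pi ihx}\right)^{k}=(1-e^{2\pi ihx})^{n}$. Substituting these back and recalling that $g=\invfourier f$ yields exactly the two claimed formulas. I do not expect any genuine obstacle here: the legitimacy of the sum–integral interchange is immediate for a finite sum, and otherwise the computation is formally identical to that of Lemma~1, with $e^{-zt}\,dt$ on $[0,\infty)$ replaced throughout by $e^{-2\pi izx}\,dx$ on $\RR$.
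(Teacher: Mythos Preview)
Your proposal is correct and follows essentially the same approach as the paper's own proof: set $g=\invfourier f$, substitute the integral representation of $f=\fourier g$ into each finite-difference definition, swap the finite sum with the integral by linearity, and collapse the inner sum via the binomial theorem. The only cosmetic difference is that the paper treats the central difference first and writes out the intermediate lines explicitly, while you describe the steps verbally; the logical content is identical.
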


\begin{proof}
    Let $n\in\NN$ and $h\in\RR^+$. Let $f\in\fourier$. Let $g=\invfourier f$. Let $z\in\CC$. Then we have that
\begin{align*}
    \delta_{h}^{n}[\fourier g](z)&=\sum_{k=0}^{n}{n\choose k}\fourier g\left(z+\left(\frac{n}{2}-k\right)h\right)(-1)^{k}\\
    &=\sum_{k=0}^{n}{n\choose k}(-1)^k\int_{\RR}g(x)\exp\left({-2\pi i\left(z+\left(\frac{n}{2}-k\right)h\right)x}\right)dx\\
    &=\int_{\RR}g(x)e^{-i\pi hxn}e^{-2\pi izx}\sum_{k=0}^{n}{n\choose k}(-1)^ke^{2\pi ihxk}dx\\
    &=\int_{\RR}g(x)e^{-i\pi hxn}e^{-2\pi izx}(1-e^{2\pi ihx})^ndx.
\end{align*}
Thus,
\begin{align*}
    \delta^{n}_{h}[f](z)=\int_{\RR}(1-e^{2\pi ihx})^ne^{-i\pi hxn}e^{-2\pi izx}\invfourier f(x)dx.
\end{align*}
We also have that
\begin{align*}
    \Delta_{h}^{n}[\fourier g](z)&=\sum_{k=0}^{n}{n\choose k}\fourier g\left(z+kh\right)(-1)^{n-k}\\
    &=\sum_{k=0}^{n}{n\choose k}(-1)^{n-k}\int_{\RR}g(x)\exp\left({-2\pi i\left(z+kh\right)x}\right)dx\\
    &=\int_{\RR}g(x)e^{-2\pi izx}\sum_{k=0}^{n}{n\choose k}(-1)^{n-k}e^{-2\pi ihxk}dx\\
    &=\int_{\RR}g(x)e^{-2\pi izx}(e^{-2\pi ihx}-1)^ndx.
\end{align*}
Thus,
\begin{align*}
    \Delta^{n}_{h}[f](z)=\int_{\RR}(e^{-2\pi ihx}-1)^ne^{-2\pi izx}\invfourier f(x)dx,
\end{align*}
as desired.
\end{proof}

\begin{definition}
    Let $a,b\in\CC$. We define
    \begin{align*}
        a+b\ZZ\defeq\{a+bn:n\in\ZZ\}.
    \end{align*}
\end{definition}

\noindent An application of Lemma 2 gives the following theorem:

\begin{theorem}
    Let $h\in\RR^+$. Assuming it exists, suppose that $\invfourier f$ is a linear combination of Lebesgue measurable functions and elements of the set $$\{\delta^{(m)}(x-a):m\in\NN,a\in\RR\backslash(h^{-1}/2+h^{-1}\ZZ)\}.$$ Let $y\in\RR$. Then
    \begin{align*}
        \Delta_{h}^{n}[f](y)=o(2^n),\\
        \nabla_{h}^{n}[f](y)=o(2^n),\\
        \delta_{h}^{n}[f](y)=o(2^n).
    \end{align*}
\end{theorem}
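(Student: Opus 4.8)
The plan is to follow the blueprint of the proof of Theorem 1, with Lemma 2 playing the role of Lemma 1. First I would decompose
\[
\invfourier f(x) = m(x) + \sum_{k=0}^{j} a_k\,\delta^{(k)}(x-b_k),
\]
absorbing the finitely many measurable summands into a single integrable function $m$ and keeping each $b_k\in\RR\setminus(h^{-1}/2+h^{-1}\ZZ)$. Taking Fourier transforms gives $f(z)=\fourier m(z)+p(z)$, where $p$ is a linear combination of terms $z^{k}e^{-2\pi i b_k z}$ and $\fourier m=f-p$ exists wherever needed. Since $\Delta_h^n$, $\nabla_h^n$, and $\delta_h^n$ are linear, it suffices to establish the $o(2^n)$ bound separately for $\fourier m$ and for $p$. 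I would treat $\Delta_h^n$ and $\delta_h^n$ directly and then read off $\nabla_h^n$ from the reflection identity $\nabla_h^n[f](y)=(-1)^n\Delta_h^n[f(-\cdot)](-y)$, observing that $x\mapsto\invfourier f(-x)$ again satisfies the hypotheses because the excluded progression $h^{-1}/2+h^{-1}\ZZ$ is symmetric about the origin.

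For the $\fourier m$ part I would apply the integral formulas of Lemma 2 and divide by $2^n$, writing for instance
\[
\frac{\Delta_h^n[\fourier m](y)}{2^n}=\int_{\RR}\left(\frac{e^{-2\pi i hx}-1}{2}\right)^{n}e^{-2\pi i yx}\,m(x)\,dx,
\]
and likewise for $\delta_h^n[\fourier m](y)/2^n$, which carries an extra factor $e^{-i\pi hxn}$ of modulus $1$. The crux is the elementary fact that $|e^{\pm 2\pi i hx}-1|=2|\sin(\pi hx)|\le 2$, with equality precisely when $hx\in\tfrac{1}{2}+\ZZ$, i.e.\ when $x\in h^{-1}/2+h^{-1}\ZZ$, a set of Lebesgue measure zero. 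Hence each integrand above is dominated in modulus by $|m(x)|$, which is integrable, and tends to $0$ for almost every $x$, so the dominated convergence theorem gives $\Delta_h^n[\fourier m](y)=o(2^n)$ and $\delta_h^n[\fourier m](y)=o(2^n)$.

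For the $p$ part, by linearity it suffices to bound a single term $q(z)=z^{N}e^{-2\pi i b z}$ with $b\notin h^{-1}/2+h^{-1}\ZZ$, and here I would reuse the polynomial-times-exponential computation from the proof of Theorem 1 nearly verbatim. Expanding $(y+kh)^{N}$ (for $\Delta_h^n$) or $\big((y+\tfrac{n}{2}h)-kh\big)^{N}$ (for $\delta_h^n$) by the binomial theorem and factoring out the exponential at the base point, the relevant inner sum is $\sum_{k}\binom{n}{k}(\pm 1)^{k}k^{c}e^{\mp 2\pi i bkh}$, which is the $c$th derivative in $t$ of $(e^{t}-1)^{n}$ evaluated at $t=\mp 2\pi i bh$ and hence $O\big(n^{c}|e^{2\pi i bh}-1|^{n}\big)$ whenever $e^{2\pi i bh}\neq 1$, exactly as in Theorem 1. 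If $b\notin h^{-1}\ZZ$, summing over $c$ yields a total of $O\big(n^{N}|e^{2\pi i bh}-1|^{n}\big)$ — for $\delta_h^n$ the moving base point $y+\tfrac{n}{2}h$ merely turns each constant $y^{N-c}$ into a quantity that is $O(n^{N-c})$, leaving the bound unchanged — and since $|e^{2\pi i bh}-1|=2|\sin(\pi bh)|<2$ precisely because $b\notin h^{-1}/2+h^{-1}\ZZ$, this is $o(2^{n})$. If instead $b\in h^{-1}\ZZ$, every factor $e^{\pm 2\pi i bkh}$ equals $1$, so the difference collapses, up to a unimodular constant, to a finite difference of order $n$ of a polynomial of degree $N$, which vanishes once $n>N$. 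In all cases the $p$ part is $o(2^{n})$, and combining this with the $\fourier m$ estimate completes the argument.

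The step I expect to be the main obstacle is the $\fourier m$ part: one has to choose the right normalization ($2^{n}$ rather than the trivial bound $1$ appearing in Theorem 1), verify that $\sup_{x\in\RR}|e^{\pm 2\pi i hx}-1|=2$ is attained only on the measure-zero progression $h^{-1}/2+h^{-1}\ZZ$ — which is exactly why the hypothesis removes that progression — and then set up dominated convergence against $|m|$. Everything else is a faithful rerun of the proof of Theorem 1, with the Fourier kernel $e^{\pm 2\pi i hx}-1$ (of modulus at most $2$) in place of the Laplace kernel $e^{-ht}-1$ (of modulus at most $1$ on $\RR^{\geq 0}$).
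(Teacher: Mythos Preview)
Your proposal is correct and follows essentially the same approach as the paper: decompose $\invfourier f$ into a measurable part $m$ and a delta part $p$, handle $\fourier m$ via Lemma~2 and dominated convergence against $|m|$ (using $|e^{\pm 2\pi ihx}-1|=2|\sin(\pi hx)|\le 2$ with equality only on the measure-zero set $h^{-1}/2+h^{-1}\ZZ$), handle $p$ by the derivative-of-$(e^t-1)^n$ estimate, and reduce $\nabla_h^n$ to $\Delta_h^n$ by reflection. Your explicit treatment of the edge case $b\in h^{-1}\ZZ$ and your check that the excluded progression is symmetric under $x\mapsto -x$ are careful touches that the paper leaves implicit, but the overall route is the same.
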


\begin{proof}
    Let $y\in\RR$. Let $h\in\RR^+$. Suppose that $\invfourier f$ is a linear combination of Lebesgue measurable functions and elements of the set $$\{\delta^{(m)}(x-a):m\in\NN,a\in\RR\backslash(h^{-1}/2+h^{-1}\ZZ)\}.$$ So, without loss of generality, we have that
    \begin{align*}
        \invfourier f(x)=m(x)+\sum_{k=0}^{j}a_k\delta^{(k)}(x-c_k),
    \end{align*}
    where $j\in\NN$, $(a_k)\subseteq\CC$, $(c_k)\subseteq\RR\backslash(h^{-1}/2+h^{-1}\ZZ)$, and $m$ is some Lebesgue measurable function. So 
    \begin{align*}
        f(y)=\fourier m(y)+\sum_{k=0}^{j}a_k e^{-2\pi ic_k y}(2\pi iy)^k.
    \end{align*}
    Let $p(y)=\sum_{k=0}^{j}a_k e^{-2\pi ic_k y}(2\pi iy)^k$. First, we address the claim for central differences. Let $N\in\{0,1,\cdots,j\}$. We now argue that $\delta^{n}_{h}[e^{-2\pi ic_N y}(2\pi iy)^N](y)$ is $o(2^n)$. We have that
     \begin{align*}
         &\delta^{n}_{h}[e^{-2\pi ic_N y}(2\pi iy)^N](y)=\sum_{k\geq0}{n\choose k}(-1)^{k} e^{-2\pi ic_N (y+(n/2-k)h)}(2\pi i(y+(n/2-k)h))^N\\
         &=e^{-2\pi ic_{N} (y+nh/2)}\sum_{b\geq0}{N\choose b}(2\pi i(y+nh/2))^{N-b}\sum_{k\geq0}{n\choose k}(-1)^{k}e^{2\pi ic_N kh}(-2\pi ikh)^{b}.
     \end{align*}
     Note
     \begin{align*}
         \sum_{k\geq0}{n\choose k}(-1)^{k}e^{2\pi ix kh}(-2\pi ikh)^{b}=(-1)^b\frac{d^b}{dx^{b}}\left(1-e^{2\pi ixh}\right)^n=O(n^b(e^{2\pi ixh}-1)^n).
     \end{align*}
     Thus by taking $x=c_N$,
     \begin{align*}
         \delta^{n}_{h}[e^{-2\pi ic_N y}(2\pi iy)^N](y)=e^{-2\pi ic_{N} (y+nh/2)}\sum_{b\geq0}{N\choose b}(2\pi i(y+nh/2))^{N-b}O(n^b(e^{2\pi ic_Nh}-1)^n)\\
         =e^{-2\pi ic_{N} (y+nh/2)}O((e^{2\pi ic_Nh}-1)^n)\sum_{b\geq0}{N\choose b}(2\pi i(y+nh/2))^{N-b}O(n^b)\\
         =e^{-\pi ic_{N}nh}O((e^{2\pi ic_Nh}-1)^n)O(n^N)=e^{-\pi ic_{N}nh}O(n^N(e^{2\pi ic_Nh}-1)^n).
     \end{align*}
    As $c_N\not\in h^{-1}/2+h^{-1}\ZZ$, it follows that $|e^{2\pi ic_N h}-1|<2$; thus, $\Delta^{n}_{h}[e^{-2\pi ic_N y}(2\pi iy)^N](y)=o(2^n)$. So $\delta^{n}_{h}[p](y)=o(2^n)$. 

    We now argue that $\delta^{n}_{h}[\fourier m](y)=o(2^n).$ By Lemma 2, we have that
    \begin{align*}
        \delta^{n}_{h}[\fourier m](y)&=\int_{\RR}(1-e^{2\pi ihx})^ne^{-i\pi hxn}e^{-2\pi iyx}m(x)dx\\
        &=\int_{\RR}(e^{-i\pi hx}-e^{i\pi hx})^ne^{-2\pi iyx}m(x)dx\\
        &=\int_{\RR}\left(-2i\sin(\pi hx)\right)^ne^{-2\pi iyx}m(x)dx\\
        &=(-2i)^n\int_{\RR}\sin^{n}(\pi hx)e^{-2\pi iyx}m(x)dx.
    \end{align*}
    Note
    \begin{align*}
        |\sin(\pi hx)|\in
    \begin{cases} 
        \{1\} & x=\frac{2k+1}{2h}~\text{for some~}k\in\ZZ \\
        [0,1) & \text{otherwise.}\\
   \end{cases}
    \end{align*}
    So $\left|\sin^{n}(\pi hx)\right|$ converges pointwise to
    \begin{align*}
        s(x)\defeq
        \begin{cases} 
        1 & x=\frac{2k+1}{2h}~\text{for some~}k\in\ZZ \\
        0 & \text{otherwise.}\\
   \end{cases}
    \end{align*}
    Let $j_n(x)\defeq\left|\sin^{n}(\pi hx)e^{-2\pi iyx}m(x)\right|$. We have that
    \begin{align*}
        |j_n(x)|\leq\left|e^{-2\pi iyx}m(x)\right|=\left|m(x)\right|
    \end{align*}
    for all $x\in\RR$ and $n\in\NN$. Note that since $m$ has Fourier transform $\fourier m$, $m$ is absolutely integrable. By assumption, $m$ is Lebesgue measurable, so each $j_n$ is Lebesgue measurable. As $j_n(x)$ converges pointwise to $s(x)\left|e^{-2\pi iyx}m(x)\right|=s(x)\left|m(x)\right|$, by the dominated convergence theorem we have that
    \begin{align*}
        \lim_{n\to\infty}\int_{\RR}j_n(x)dx=\int_{\RR}s(x)\left|m(x)\right|dx.
    \end{align*}
    As $s=0$ almost everywhere,
    \begin{align*}
        \int_{\RR}s(x)\left|m(x)\right|dx=0.
    \end{align*}
    Thus,
    \begin{align*}
        \lim_{n\to\infty}\left|\frac{\delta^{n}_{h}[\fourier m](y)}{2^n}\right|=\lim_{n\to\infty}\left|\int_{\RR}\sin^{n}(\pi hx)e^{-2\pi iyx}m(x)dx\right|\leq\lim_{n\to\infty}\int_{\RR}j_n(x)dx=0.
    \end{align*}
    Therefore
    \begin{align*}
        \lim_{n\to\infty}\frac{\delta^{n}_{h}[\fourier m](y)}{2^n}=0.
    \end{align*}
    Thus,
    \begin{align*}
        \delta_{h}^{n}[\fourier m](y)=o(2^n).
    \end{align*}
    Thus,
    \begin{align*}
        \delta_{h}^{n}[f](y)=\delta_{h}^{n}[\fourier m](y)+\delta_{h}^{n}[p](y)=o(2^n)+o(2^n)=o(2^n).
    \end{align*}

    Now, we address the claim for forward differences. We now argue that \\$\Delta^{n}_{h}[e^{-2\pi ic_N y}(2\pi iy)^N](y)$ is $o(2^n)$. We have that
     \begin{align*}
         &\Delta^{n}_{h}[e^{-2\pi ic_N y}(2\pi iy)^N](y)=\sum_{k\geq0}{n\choose k}(-1)^{n-k} e^{-2\pi ic_N (y+kh)}(2\pi i(y+kh))^N\\
         &=e^{-2\pi ic_{N} y}\sum_{b\geq0}{N\choose b}(2\pi iy)^{N-b}\sum_{k\geq0}{n\choose k}(-1)^{n-k}e^{-2\pi ic_N kh}(2\pi ikh)^{b}.
     \end{align*}
     Note
     \begin{align*}
         \sum_{k\geq0}{n\choose k}(-1)^{n-k}e^{-2\pi ix kh}(2\pi ikh)^{b}=(-1)^b\frac{d^b}{dx^{b}}\left(e^{-2\pi ixh}-1\right)^n=O(n^b(e^{-2\pi ixh}-1)^n).
     \end{align*}
     Thus by taking $x=c_N$,
     \begin{align*}
         \Delta^{n}_{h}[e^{-2\pi ic_N y}(2\pi iy)^N](y)&=e^{-2\pi ic_{N} y}\sum_{b\geq0}{N\choose b}(2\pi iy)^{N-b}O(n^b(e^{-2\pi ic_Nh}-1)^n)\\
         &=O((e^{-2\pi ic_Nh}-1)^n)\sum_{b\geq0}{N\choose b}(2\pi iy)^{N-b}O(n^b)\\
         &=O((e^{-2\pi ic_Nh}-1)^n)O(n^N)=O(n^N(e^{-2\pi ic_Nh}-1)^n).
     \end{align*}
     As $c_N\not\in h^{-1}/2+h^{-1}\ZZ$, it follows that $|e^{-2\pi ic_N h}-1|<2$; thus, $\Delta^{n}_{h}[e^{-2\pi ic_N y}(2\pi iy)^N](y)=o(2^n)$. As $p$ is a finite sum of terms which are $o(2^n)$, we have that $\Delta^{n}_{h}[p](y)=o(2^n)$.
    
    We now argue that $\Delta^{n}_{h}[\fourier m](y)=o(2^n)$. By Lemma 2, we have that
    \begin{align*}
        \lim_{n\to\infty}\left|\frac{\Delta^{n}_{h}[\fourier m](y)}{2^n}\right|&=\lim_{n\to\infty}\left|\int_{\RR}\left(\frac{e^{-2\pi ihx}-1}{2}\right)^ne^{-2\pi iyx}m(x)dx\right|\\
        &\leq\lim_{n\to\infty}\int_{\RR}\left|\left(\frac{e^{-2\pi ihx}-1}{2}\right)^ne^{-2\pi iyx}m(x)\right|dx.
    \end{align*}
    Let $$\ell_n(x)\defeq\left(\frac{e^{-2\pi ihx}-1}{2}\right)^n.$$ Note for each $n\in\NN$, $\ell_n(\RR)$ is contained in the closed unit disk, intersecting the unit circle only at $z=-1$. Specifically, for all $n\in\NN$, 
    \begin{align*}
        \ell^{-1}_n\left(\{-1\}\right)=\left\{x\in\RR:x=\frac{2k+1}{2h}~\text{for some~}k\in\ZZ\right\}.
    \end{align*}
    So $|\ell_n(x)|$ converges pointwise to 
    \begin{align*}
        s(x)\defeq
        \begin{cases} 
        1 & x=\frac{2k+1}{2h}~\text{for some~}k\in\ZZ \\
        0 & \text{otherwise.}\\
   \end{cases}
    \end{align*}
    The remaining argument follows identically to the argument given that $\delta^{n}_{h}[\fourier m](y)=o(2^n)$. So
    \begin{align*}
        \lim_{n\to\infty}\left|\frac{\Delta^{n}_{h}[\fourier m](y)}{2^n}\right|\leq\lim_{n\to\infty}\int_{\RR}\left|\left(\frac{e^{-2\pi ihx}-1}{2}\right)^ne^{-2\pi iyx}m(x)\right|dx=0.
    \end{align*}
    Therefore
    \begin{align*}
        \lim_{n\to\infty}\frac{\Delta^{n}_{h}[\fourier m](y)}{2^n}=0.
    \end{align*}
    Thus,
    \begin{align*}
        \Delta_{h}^{n}[\fourier m](y)=o(2^n).
    \end{align*}
     Thus,
     \begin{align*}
         \Delta_{h}^{n}[f](y)=\Delta_{h}^{n}[\fourier m](y)+\Delta_{h}^{n}[p](y)=o(2^n)+o(2^n)=o(2^n).
     \end{align*}
     As we have proven the claim for forward differences, the claim for backward differences follows by the relationship
     \begin{align*}
        \nabla_{h}^{n}[f](y)=(-1)^n\Delta^{n}_{h}[g](-y),
    \end{align*}
    where $g(y)=f(-y)$. So
    \begin{align*}
        \nabla_{h}^{n}[f](y)=o(2^n),
    \end{align*}
    completing the proof.
\end{proof}

\noindent An application of Theorem 3 yields the following result:

\begin{theorem}
     Let $h\in\RR^+$. Assuming it exists, suppose $\invfourier f$ is a linear combination of Lebesgue measurable functions and elements of the set $$\{\delta^{(m)}(x-a):m\in\NN,a\in\RR\backslash(h^{-1}\ZZ)\}.$$ Let $y\in\RR$. Then
    \begin{align*}
        &\sum_{k=0}^{n}{n\choose k}f(y+kh)=o(2^n),\\
        &\sum_{k=0}^{n}{n\choose k}f(y-kh)=o(2^n),\\
        &\sum_{k=0}^{n}{n\choose k}f\left(y+\left(\frac{n}{2}-k\right)h\right)=o(2^n).
    \end{align*}
\end{theorem}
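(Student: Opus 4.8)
The plan is to reduce each of the three binomial sums to a finite difference of a \emph{modulated} copy of $f$ and then invoke Theorem 3. Set $\tilde f(z)\defeq e^{-i\pi z/h}f(z)$. The purpose of the modulation is that the alternating sign $(-1)^k$ built into $\Delta^n_h$, $\nabla^n_h$, and $\delta^n_h$ gets absorbed into $\tilde f$: since $\tilde f(y+kh)=e^{-i\pi y/h}(-1)^k f(y+kh)$ and $\tilde f(y+(\frac{n}{2}-k)h)=e^{-i\pi y/h}e^{-i\pi n/2}(-1)^k f(y+(\frac{n}{2}-k)h)$, substituting into the definitions from Section 1 yields
\begin{align*}
    \sum_{k=0}^{n}{n\choose k}f(y+kh)&=(-1)^n e^{i\pi y/h}\,\Delta^n_h[\tilde f](y),\\
    \sum_{k=0}^{n}{n\choose k}f\left(y+\left(\frac{n}{2}-k\right)h\right)&=e^{i\pi y/h}e^{i\pi n/2}\,\delta^n_h[\tilde f](y).
\end{align*}
For the backward sum, note that after the change of index $x\mapsto-x$ it is a forward-type sum for the reflection $w\mapsto f(-w)$ evaluated at $-y$, so it suffices to combine the forward case with this reflection (as is already done for finite differences elsewhere in the article).

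Next I would check that $\tilde f$ (and $w\mapsto f(-w)$) satisfy the hypothesis of Theorem 3 with the same $h$. Under the Fourier convention of this article, multiplication by $e^{-i\pi z/h}=e^{-2\pi i(h^{-1}/2)z}$ corresponds to translation by $h^{-1}/2$ in the inverse-transform domain, so $\invfourier\tilde f(x)=(\invfourier f)(x-h^{-1}/2)$ in the distributional sense. Writing $\invfourier f=m+\sum_k a_k\delta^{(k)}(\cdot-c_k)$ with $m$ Lebesgue measurable and each $c_k\in\RR\backslash(h^{-1}\ZZ)$, the translate $m(\cdot-h^{-1}/2)$ is again Lebesgue measurable and each $\delta^{(k)}(\cdot-c_k)$ becomes $\delta^{(k)}(\cdot-(c_k+h^{-1}/2))$. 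The key point is that $c_k+h^{-1}/2\in h^{-1}/2+h^{-1}\ZZ$ if and only if $c_k\in h^{-1}\ZZ$, which is excluded; hence the new locations lie in $\RR\backslash(h^{-1}/2+h^{-1}\ZZ)$, exactly the set in Theorem 3. The same computation for $w\mapsto f(-w)$, whose inverse Fourier transform is $(\invfourier f)(-\cdot)$ and thus sends $c_k\mapsto-c_k$, works because $\RR\backslash(h^{-1}\ZZ)$ is symmetric about $0$.

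Finally, Theorem 3 gives $\Delta^n_h[\tilde f](y)=o(2^n)$ and $\delta^n_h[\tilde f](y)=o(2^n)$, together with the corresponding statement for the reflected function at $-y$; multiplying through by the unit-modulus prefactors from the two displayed identities yields that the forward, central, and backward binomial sums are each $o(2^n)$. I expect the only genuinely delicate point to be the bookkeeping in the middle step: one must confirm that the modulation--translation duality holds for the class of distributions allowed here (finite linear combinations of Lebesgue measurable functions with Fourier transforms and derivatives of shifted Dirac deltas), and, above all, track the affine shift $c\mapsto c+h^{-1}/2$ precisely enough to see that the forbidden set $h^{-1}\ZZ$ of Theorem 4 maps exactly onto the forbidden set $h^{-1}/2+h^{-1}\ZZ$ of Theorem 3. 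Everything else is rearrangement of finite sums and unwinding definitions.
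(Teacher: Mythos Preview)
Your proposal is correct and follows essentially the same approach as the paper: modulate $f$ by a unimodular exponential to convert the unsigned binomial sums into finite differences, check via the modulation--translation duality that the shifted delta locations land in $\RR\setminus(h^{-1}/2+h^{-1}\ZZ)$, and invoke Theorem~3. The only cosmetic differences are that the paper uses $\alpha(y)=e^{i\pi y/h}f(y)$ (the opposite sign in the exponent, hence a translation by $+h^{-1}/2$ rather than $-h^{-1}/2$), and that the paper handles the backward sum directly via an identity for $\nabla^{n}_{h}[\alpha]$ rather than by reflection; neither distinction is substantive.
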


\begin{proof}
    Let $y\in\RR$. Let $h\in\RR^+$. Suppose that $\invfourier f$ is a linear combination of Lebesgue measurable functions and elements of the set $$\{\delta^{(m)}(x-a):m\in\NN,a\in\RR\backslash(h^{-1}\ZZ)\}.$$ Let $\alpha(y)=e^{i\pi y/h}f(y)$. Note the relationships
    \begin{align*}
        &\Delta^{n}_{h}[\alpha](y)=(-1)^n e^{i\pi y/h}\sum_{k=0}^{n}{n\choose k}f(y+kh),\\
        &\nabla^{n}_{h}[\alpha](y)=e^{i\pi y/h}\sum_{k=0}^{n}{n\choose k}f(y-kh),\\
        &\delta^{n}_{h}[\alpha](y)=e^{i\pi y/h}e^{i\pi n/2}\sum_{k=0}^{n}{n\choose k}f\left(y+\left(\frac{n}{2}-k\right)h\right).
    \end{align*}
    We have that $\invfourier\alpha(x)=\invfourier f(x+h^{-1}/2)$. As translations of Lebesgue measurable functions are Lebesgue measurable, $\invfourier \alpha$ is a linear combination of Lebesgue measurable functions and elements of the set $\{\delta^{(m)}(x-a):m\in\NN,a\in\RR\backslash(h^{-1}/2+h^{-1}\ZZ)\}$. Thus by Theorem 3,
    \begin{align*}
        \Delta_{h}^{n}[\alpha](y)=o(2^n),\\
        \nabla_{h}^{n}[\alpha](y)=o(2^n),\\
        \delta_{h}^{n}[\alpha](y)=o(2^n).
    \end{align*}
    Thus,
    \begin{align*}
        &\sum_{k=0}^{n}{n\choose k}f(y+kh)=o(2^n),\\
        &\sum_{k=0}^{n}{n\choose k}f(y-kh)=o(2^n),\\
        &\sum_{k=0}^{n}{n\choose k}f\left(y+\left(\frac{n}{2}-k\right)h\right)=o(2^n),
    \end{align*}
    as desired.
\end{proof}

Note that the conclusion of the above theorem cannot be improved to $o(\alpha^n)$ for $\alpha<2$. Indeed, take $f(y)=(y+1)^{-1}$. The inverse Fourier transform of $f$ exists and is Lebesgue measurable; however,
\begin{align*}
    \sum_{k\geq0}{n\choose k}\frac{1}{k+1}=\frac{2^{n+1}-1}{n+1},
\end{align*}
which is not $o(\alpha^n)$ if $\alpha<2$. This identity may be verified by observing that
\begin{align*}
    \sum_{k\geq0}{n\choose k}\frac{x^{k+1}}{k+1}=\sum_{k\geq0}{n\choose k}\int_{0}^{x}t^{k}dt=\int_{0}^{x}(t+1)^{n}dt=\frac{(x+1)^{n+1}-1}{n+1},
\end{align*}
where we take $x=1$. 

Unfortunately, we cannot allow all translations of the Dirac delta function and its derivatives in the inverse Fourier transform of $f$ for Theorem 4. This is easy to see: take $f(y)=1$, wherein $\invfourier f(x)=\delta(x)$ and
\begin{align*}
    \sum_{k\geq0}{n\choose k}f(k)=\sum_{k\geq0}{n\choose k}=2^n\neq o(2^n).
\end{align*}
Theorem 4 fails to apply to the above sum since $0\in \ZZ$.

\section{Discussion}

An application of Theorem 1 arises in guaranteeing the rate of convergence of the \emph{Euler transformation} of an alternating series.
\begin{definition}[Euler transformation]
    The Euler transformation of an alternating series $\sum_{n=0}^{\infty}(-1)^n f(n)$ is given by
    \begin{align*}
        \sum_{n=0}^{\infty}(-1)^n f(n)=\sum_{n=0}^{\infty}\frac{(-1)^n\Delta^n[f](0)}{2^{n+1}}.
    \end{align*}
\end{definition}
Indeed, if $\invlaplace f$ is absolutely integrable and is a linear combination of Lebesgue measurable functions and elements of the set $\{\delta^{(m)}(t-a):m\in\NN,a\in\RR^{\geq0}\}$, then the Euler transformation of $\sum_{n=0}^{\infty}(-1)^n f(n)$ converges at least as fast as $\sum_{n=0}^{\infty}(-1)^n/2^{n+1}$, the error of which is $\nicefrac{1}{6}\cdot(\nicefrac{-1}{2})^n$.

Theorem 2 guarantees the existence and convergence of certain Newton series, which can be quite a delicate task otherwise. The proceeding result, Corollary 2, extends work in \cite{entireandexptype} from rational functions with linear denominators to all rational functions with real coefficients.

Theorems 3 and 4 give an asymptotic bound on probabilistic or combinatorial binomial sums when the sequence summed is known (or at least known to be interpolated by a Fourier-transformable function).

\section{Acknowledgements}

Thanks to Sai Sivakumar for directing me to results relevant to the topics of this article and for his excellent mentorship. Thanks to Kenneth DeMason for a thorough review of the paper.

\bibliography{refs}{}
\bibliographystyle{plain}

\end{document}